\def\elsartstyle{%
    \def\normalsize{\@setfontsize\normalsize\@xiipt{14.5}}
    \def\small{\@setfontsize\small\@xipt{13.6}}
    \let\footnotesize=\small
    \def\large{\@setfontsize\large\@xivpt{18}}
    \def\Large{\@setfontsize\Large\@xviipt{22}}
    \skip\@mpfootins = 18\p@ \@plus 2\p@
    \normalsize
} \@ifundefined{square}{}{} \makeatother
\newtheorem{theorem}{Theorem}
\newtheorem{lemma}[theorem]{Lemma}
\theoremstyle{definition}
\newcommand{\ba}{\begin{array} }
\newcommand{\ea}{\end{array} }
\newcommand{\bae}{\begin{eqnarray}}
\newcommand{\eae}{\end{eqnarray}}
\newcommand{\bea}{\begin{eqnarray*}}
\newcommand{\eea}{\end{eqnarray*}}
\newcommand{\be}{\begin{equation}}
\newcommand{\ee}{\end{equation}}
\numberwithin{equation}{section}
\numberwithin{theorem}{section}
\numberwithin{Lemma}{section}
\begin{document}
\begin{frontmatter}
\title{Meshfree method for solving the elliptic Monge-Amp$\grave{\textrm{e}}$re\\ equation with Dirichlet boundary\tnoteref{cor1}}
\author{Zhiyong Liu}
\author {Qiuyan Xu$^{*}$}
\ead{qiuyanxu@nxu.edu.cn} \cortext[cor1]{Corresponding author. }
\address{School of Mathematics and Statistics, \\Ningxia University,
Yinchuan, China, 750021 }
\tnotetext[t1]{The research is supported by the Natural Science Foundations of China (No.12061057,12202219),
and the Natural Science Foundations of Ningxia Province (No.2023AAC05001,2023AAC03003).}

\begin{abstract}
We prove the convergence of meshfree method for solving the elliptic Monge-Amp\textrm{$\grave{\textrm{e}}$}re equation with Dirichlet boundary on the bounded domain. $L^{2}$ error is obtained based on the kernel-based trial spaces generated by the compactly
supported radial basis functions. We obtain the convergence result when the testing discretization is finer than the trial discretization.
The convergence rate depend on the regularity of the solution, the smoothness of the computing domain,
and the approximation of scaled kernel-based spaces. The presented convergence theory covers a wide range of kernel-based trial spaces including stationary approximation and non-stationary approximation. An extension to non-Dirichlet boundary condition is in a forthcoming paper.
\end{abstract}
\begin{keyword}
Monge-Amp$\grave{\textrm{e}}$re equation; Meshfree method; Radial basis functions; Convergence rate
\end{keyword}
\end{frontmatter}
\thispagestyle{empty}
\section{Introduction}
We consider the meshfree method for numerically solving the Monge-Amp$\grave{\textrm{e}}$re equation with Dirichlet boundary
condition:
\begin{eqnarray}
  \det(D^{2}u(\textbf{x})) &=& f(\textbf{x}), \quad\textrm{in}\ \Omega\subset \mathbb{R}^{d},\label{ma1}\\
  u(\textbf{x}) &=& g(\textbf{x}),\quad \textrm{on}\ \partial\Omega,\label{ma2}
\end{eqnarray}
here $\Omega$ is a strictly convex polygonal domain with a smooth boundary $\partial\Omega$, $f$ is
a strictly positive function, and $\det(D^{2}u)$ is the determinant of the Hessian matrix $D^{2}u$. This equation
is usually augmented by the convexity constraint of $u$, which implies that the equation is elliptic.
The numerical solution of Monge-Amp$\grave{\textrm{e}}$re equation is an very active topic of research.
Many researchers developed kinds of numerical methods and analyzed their convergence, such as the finite
difference methods \cite{OBERMAN,BEN}, finite element methods \cite{FENG,BRENNER}, and spline methods \cite{LAI}.  In order to design a certain type of
numerical method which can avoid the tedious mesh generation and the domain integration and can be capable of
dealing with any irregular distribution of nodes, some
meshfree methods based radial basis functions have been researched, which is started from 2013 in the paper \cite{LIU12}.
And from then on, a series of meshfree methods have been proposed for solving the elliptic Monge-Amp$\grave{\textrm{e}}$re equation \cite{LIU13,LIU14,b19,LIU20}.
Unfortunately, the convergence theory of meshfree method has not been proven yet. The present paper will prove the convergence of meshfree
method for the Monge-Amp$\grave{\textrm{e}}$re equation. To simplify the proof, we only consider the case of taking $d=2$:
\begin{eqnarray}
  Fu:=\det(D^{2}u(x,y)) &=& f(x,y), \quad\textrm{in}\ \Omega\subset \mathbb{R}^{2},\label{2ma1}\\
  u(x,y) &=& g(x,y),\quad \textrm{on}\ \partial\Omega.\label{2ma2}
\end{eqnarray}
We will rely the B$\ddot{\textrm{o}}$hmer/Schaback's
nonlinear discretization theory \cite{b10,b13} and the ideas of the latest paper \cite{LIU23}, which discussed the convergence of unsymmetric radial basis functions method
for second order quasilinear elliptic equations.

The paper is organized as follows. Section 2 is devoted to some notation and an introduction of kernel-based trial spaces generated by compactly supported radial basis functions. Section 3 introduces
the well-posed theory of the Monge-Amp$\grave{\textrm{e}}$re equation and its meshfree discretization.
Section 4 is the theoretical analysis of convergence.
\section{Radial basis functions trial spaces}
A translation-invariant kernel such as the strictly positive definite radial function $\Phi(\textbf{x},\textbf{y})=\Phi(\textbf{x}-\textbf{y})$
will generate a reproducing kernel Hilbert space, the so-called native space $\mathcal{N}_{\Phi}(\Omega)$.
But we will make use of the native space $\mathcal{N}_{\Phi}(\mathbb{R}^{d})$ which defined in $\mathbb{R}^{d}$ and
described by Fourier transform. $\mathcal{N}_{\Phi}(\mathbb{R}^{d})$ consists of all functions $f\in L^{2}(\mathbb{R}^{d})$
with
\begin{equation}\label{nnorm}
\|f\|_{\Phi}^{2}=\int_{\mathbb{R}^{d}}\frac{|\widehat{f}(\boldsymbol{\omega})|^{2}}
{\widehat{\Phi}(\boldsymbol{\omega})}d\boldsymbol{\omega}<\infty.
\end{equation}
Hence, if the Fourier transform of $\Phi:\mathbb{R}^{d}\rightarrow\mathbb{R}$ has the algebraic decay condition
\begin{equation}\label{adc}
c_{1}(1+\|\boldsymbol{\omega}\|_{2}^{2})^{-\sigma}\leq \widehat{\Phi}(\boldsymbol{\omega})\leq c_{2}(1+\|\boldsymbol{\omega}\|_{2}^{2})^{-\sigma}
\end{equation}
for two fixed constants $0<c_{1}<c_{2}$, then the native space $\mathcal{N}_{\Phi}(\mathbb{R}^{d})$ will be norm equivalent to
the Sobolev space $W^{\sigma,2}(\mathbb{R}^{d})$. Thus, with a radial function $\Phi$ satisfied (\ref{adc}) and a scaling parameter
$\delta\in(0,1]$ we can construct a scaled radial function
\begin{equation}\label{rbf}
\Phi_{\delta}:=\delta^{-d}\Phi((\textbf{x}-\textbf{y})/\delta), \quad \textbf{x},\textbf{y}\in\mathbb{R}^{d}.
\end{equation}

Thus we can consider the finite dimensional kernel-based trial space generated by $\Phi_{\delta}$. Let
$Y^{I}=\{\textbf{y}_{1},\textbf{y}_{2},\ldots,\textbf{y}_{N_{I}}\}\subseteq \Omega$ and $Y^{B}=\{\textbf{y}_{N_{I}+1},\textbf{y}_{N_{I}+2},\ldots,\textbf{y}_{N}\}\subseteq \partial\Omega$
be some data sites which are arranged on the domain and its boundary respectively. Then the whole of centers are
$Y=Y^{I}\cup Y^{B}$, which can used to construct a trial space and determine its degrees of freedom.
We denote $B=B(\textbf{0},1)$ as the unit ball in $\mathbb{R}^{d-1}$ defined by using a number of $C^{k,\gamma}-$diffeomorphisms
$\psi_{j}:B\rightarrow V_{j}, \quad 1\leq j \leq K,$
and $V_{j}\subseteq \mathbb{R}^{d}$ are
open sets such that $\partial\Omega=\bigcup_{j=1}^{K}V_{j}$. Some measures are defined as
follows:
$$h_{I}:=\sup_{\textbf{y}\in\Omega}\min_{\textbf{y}_{j}\in Y^{I}}\|\textbf{y}-\textbf{y}_{j}\|_{2},\ q_{I}:=\frac{1}{2}\min_{\textbf{y}_{i},\textbf{y}_{k}\in Y^{I},i\neq k}\|\textbf{y}_{i}-\textbf{y}_{k}\|_{2},
\quad \textrm{for interior centers} \ Y^{I},$$
$$h_{B}:=\max_{1\leq j\leq K}h_{Bj}=\max_{1\leq j\leq K}\left\{\sup_{\textbf{y}\in B=B(\textbf{0},1)}
\min_{\textbf{y}_{j}\in \psi_{j}^{-1}\left(Y^{B}\bigcap V_{j}\right)}\|\textbf{y}-\textbf{y}_{j}\|_{2}\right\},
\quad \textrm{for boundary centers} \ Y^{B},$$
$$q_{B}:=\min_{1\leq j\leq K}q_{Bj}=\min_{1\leq j\leq K}\left\{\frac{1}{2}\min_{\textbf{y}_{i},\textbf{y}_{k}
\in \psi_{j}^{-1}\left(Y^{B}\bigcap V_{j}\right),i\neq k}\|\textbf{y}_{i}-\textbf{y}_{k}\|_{2}\right\},
\quad \textrm{for boundary centers} \ Y^{B},$$
$$h_{Y}:=\max\{h_{I},h_{B}\}<1,\quad q_{Y}:=\min\{q_{I},q_{B}\}.$$
Using the scaled radial function $\Phi_{\delta}$ as basis functions, then the radial basis functions trial space is
\begin{equation}
\label{ts}
W_{\delta}=\textrm{span}\{\Phi_{\delta}(\cdot-\textbf{y})|\textbf{y}\in Y\}.
\end{equation}

In the later theoretical analysis, we need two important auxiliary results: inverse inequality and sampling inequality.
The following inverse inequality has been proved for RBF approximation in the reference \cite{LIU23} on the bounded domains.

\begin{lemma}\label{invv}(\textbf{inverse inequality})
Let $\Omega\subseteq \mathbb{R}^{d}$ be a bounded domain satisfying an interior cone condtion. Let $Y$ be
quasi-uniform in the sense that $h_{Y}\leq cq_{Y}$. For all $\delta-$dependent functions of the form $f=\sum_{j=1}^{N}c_{j}\Phi_{\delta}(\cdot-\textbf{x}_{j}), \ \textbf{x}_{j}\in Y,$
there exists a constant $C$ such that
\begin{equation}\label{inv}
\|f\|_{W^{\sigma,2}(\Omega)}\leq C\delta^{\sigma}h_{Y}^{-\sigma+d/2}\|f\|_{L^{2}(\Omega)}.
\end{equation}
\end{lemma}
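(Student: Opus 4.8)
The plan is to reduce the inverse inequality on $\Omega$ to the known scaled inverse inequality on all of $\mathbb{R}^d$, exploiting the native-space/Sobolev norm equivalence under the decay condition \eqref{adc} together with the scaling relation \eqref{rbf}. First I would recall that for the unscaled kernel ($\delta=1$) the Bernstein-type inverse estimate
\[
\|f\|_{W^{\sigma,2}(\Omega)}\le C\,h_Y^{-\sigma+d/2}\,\|f\|_{L^2(\Omega)}
\]
holds for $f\in W_1=\mathrm{span}\{\Phi(\cdot-\mathbf x_j)\}$ under the interior cone condition and quasi-uniformity $h_Y\le c\,q_Y$; this is exactly the statement already proved in \cite{LIU23}, and the native space $\mathcal N_\Phi(\mathbb R^d)$ is norm-equivalent to $W^{\sigma,2}(\mathbb R^d)$ by \eqref{adc}. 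The key structural observation is that if $f=\sum_j c_j\Phi_\delta(\cdot-\mathbf x_j)$ lies in $W_\delta$, then the rescaled function $f_\delta(\mathbf x):=\delta^{d}f(\delta\mathbf x)=\sum_j c_j\Phi(\mathbf x - \mathbf x_j/\delta)$ lies in the unscaled trial space over the rescaled center set $Y/\delta=\{\mathbf x_j/\delta\}$, whose fill distance is $h_Y/\delta$ and separation distance $q_Y/\delta$, so quasi-uniformity is preserved under the dilation.

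The second step is to track how both sides of the estimate transform under $\mathbf x\mapsto\delta\mathbf x$. A change of variables gives $\|f_\delta\|_{L^2}^2 = \delta^{2d}\int|f(\delta\mathbf x)|^2\,d\mathbf x = \delta^{d}\|f\|_{L^2}^2$, and for the $\sigma$-th order (semi)norm each derivative brings down a factor $\delta$, so $\|f_\delta\|_{W^{\sigma,2}}$ scales like $\delta^{\sigma}\,\delta^{d/2}\|f\|_{W^{\sigma,2}}$ in the leading term (the lower-order terms scale with smaller powers of $\delta$ and since $\delta\le 1$ they are dominated, which is why the Fourier/native-space formulation is cleaner here — one works directly with $\widehat{\Phi_\delta}(\boldsymbol\omega)=\widehat\Phi(\delta\boldsymbol\omega)$ and the homogeneous weight $\|\boldsymbol\omega\|_2^{2\sigma}$). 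Applying the unscaled inverse inequality to $f_\delta$ over $Y/\delta$, substituting the fill distance $h_Y/\delta$, and then undoing the scaling on both norms yields
\[
\delta^{\sigma+d/2}\|f\|_{W^{\sigma,2}(\Omega)}\;\lesssim\; \|f_\delta\|_{W^{\sigma,2}}\;\le\; C\,(h_Y/\delta)^{-\sigma+d/2}\,\|f_\delta\|_{L^2}\;=\;C\,(h_Y/\delta)^{-\sigma+d/2}\,\delta^{d/2}\|f\|_{L^2(\Omega)},
\]
and collecting the powers of $\delta$ gives exactly the factor $\delta^{\sigma}h_Y^{-\sigma+d/2}$ claimed in \eqref{inv}.

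The main obstacle I anticipate is making the domain bookkeeping rigorous rather than the algebra: the unscaled estimate from \cite{LIU23} is stated on a fixed bounded domain with a fixed interior cone, whereas after dilation we are working on $\delta^{-1}\Omega$, whose cone radius grows like $\delta^{-1}$ — one must check that the constant $C$ in the Bernstein inequality does not degrade under this dilation, which again is cleanest if the underlying estimate is first established on $\mathbb R^d$ (where dilation invariance is manifest) and then localized to $\Omega$ via an extension/restriction argument, or if the cited inequality is already scale-tracked in $\delta$. A secondary technical point is handling the full Sobolev norm versus the homogeneous seminorm: since $\delta\in(0,1]$ and $h_Y<1$, the inhomogeneous lower-order contributions only help, so absorbing them into the stated constant is routine but should be noted explicitly. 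Once these scaling-of-the-domain issues are dispatched, the result follows immediately from the cited $\delta=1$ case.
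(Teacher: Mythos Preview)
First, note that the paper does not give an in-text proof of this lemma at all: it is quoted directly from \cite{LIU23}, where the inequality is already established \emph{with} the $\delta$-dependence built in. There is therefore no ``paper's own proof'' to compare your scaling reduction against beyond that citation.

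More importantly, your dilation argument contains a concrete arithmetic slip in the final step. From your own displayed chain one has
\[
\delta^{\sigma+d/2}\|f\|_{W^{\sigma,2}(\Omega)}\;\le\;C\,(h_Y/\delta)^{-\sigma+d/2}\,\delta^{d/2}\|f\|_{L^2(\Omega)}\;=\;C\,h_Y^{-\sigma+d/2}\,\delta^{\sigma}\|f\|_{L^2(\Omega)},
\]
and dividing by the factor $\delta^{\sigma+d/2}$ on the left gives
\[
\|f\|_{W^{\sigma,2}(\Omega)}\;\le\;C\,\delta^{-d/2}\,h_Y^{-\sigma+d/2}\,\|f\|_{L^2(\Omega)},
\]
not the asserted $\delta^{\sigma}h_Y^{-\sigma+d/2}$; the $\delta$-exponents simply do not collect to $\delta^{\sigma}$. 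A sanity check makes this unavoidable: for a single translate $f=\Phi_\delta$ one computes $\|\Phi_\delta\|_{W^{\sigma,2}}/\|\Phi_\delta\|_{L^2}\sim\delta^{-\sigma}$, which blows up as $\delta\to0$, so no pure dilation of the $\delta=1$ Bernstein bound can ever produce a prefactor that \emph{shrinks} like $\delta^{\sigma}$ with $h_Y$ held fixed. Either an implicit coupling between $\delta$ and $h_Y$ (or a sign convention in \eqref{inv}) is being used in \cite{LIU23}, or the proof genuinely requires tracking $\delta$ through the native-space/Fourier machinery from the start rather than bootstrapping from the unscaled case. The domain-dilation issue you correctly flag is real but secondary to this $\delta$-power miscount.
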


The well-known sampling inequality has been proved in
\cite{nar06,rieger10}.
\begin{lemma}\label{wl3}(\textbf{sampling inequality})
Suppose $\Omega\subseteq \mathbb{R}^{d}$ is a bounded domain with an interior cone condition. Let
$q\in[1,\infty]$ and constants $0\leq \mu<\mu+d/2<\lfloor m \rfloor$. Then there are positive constant $C$, $h_{0}$
such that
\begin{equation}
\|f\|_{W^{\mu,q}(\Omega)}\leq C\left(h_{Y}^{m-\mu-d(1/2-1/q)_{+}}\|f\|_{W^{m,2}(\Omega)}
+h_{Y}^{-\mu}\|\Pi_{Y}f\|_{\infty,Y}\right)
\end{equation}
holds for every discrete set $Y\subset\Omega$ with fill distance at most $h_{Y}\leq h_{0}$ and
every $f\in W^{m,2}(\Omega)$. Here $\Pi_{Y}$ is the function evaluation operator at the data sites $Y$.
\end{lemma}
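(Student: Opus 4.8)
The plan is to reduce the claimed global inequality to local estimates on small patches and then reassemble. First I would use the interior cone condition to build a finite cover $\{D_i\}_i$ of $\Omega$ by domains that are star-shaped with respect to balls, each of diameter comparable to $h_Y$, with uniformly bounded mutual overlap, and arranged so that each $D_i$ contains a point set $Y_i:=Y\cap D_i$ whose fill distance inside $D_i$ is a fixed fraction of $\operatorname{diam}(D_i)$; the constant $h_0$ is exactly the threshold making such a construction possible. On each $D_i$ introduce the averaged Taylor (Sobolev) polynomial $Q_i f$ of degree less than $\lfloor m\rfloor$. By the Bramble--Hilbert/Dupont--Scott lemma, after rescaling $D_i$ to a reference patch of unit size and rescaling back, one gets
\begin{equation*}
\|f-Q_i f\|_{W^{\mu,q}(D_i)}\le C\,(\operatorname{diam} D_i)^{\,m-\mu-d(1/2-1/q)_{+}}\,\|f\|_{W^{m,2}(D_i)},
\end{equation*}
and, since $m>d/2$ gives the embedding $W^{m,2}(D_i)\hookrightarrow C(\overline{D_i})$,
\begin{equation*}
\|f-Q_i f\|_{L^{\infty}(D_i)}\le C\,(\operatorname{diam} D_i)^{\,m-d/2}\,\|f\|_{W^{m,2}(D_i)},
\end{equation*}
with constants independent of $i$ thanks to the scaling normalization.

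Next I would control the polynomial part by the sample values. Because $Y_i$ has small fill distance relative to $\operatorname{diam}(D_i)$, it is a norming set for the finite-dimensional space of polynomials of degree less than $\lfloor m\rfloor$ on $D_i$; transporting this to the reference patch shows the norming constant is scale-invariant, so
\begin{equation*}
\|Q_i f\|_{W^{\mu,q}(D_i)}\le C\,(\operatorname{diam} D_i)^{\,d/q-\mu}\,\|Q_i f\|_{\infty,Y_i}.
\end{equation*}
Writing $\|Q_i f\|_{\infty,Y_i}\le \|\Pi_Y f\|_{\infty,Y}+\|f-Q_i f\|_{\infty,Y_i}$, bounding the last term by the $L^{\infty}$ estimate above, and then splitting $f=(f-Q_i f)+Q_i f$ on $D_i$, yields the local bound
\begin{equation*}
\|f\|_{W^{\mu,q}(D_i)}\le C\Big((\operatorname{diam} D_i)^{\,m-\mu-d(1/2-1/q)_{+}}\|f\|_{W^{m,2}(D_i)}+(\operatorname{diam} D_i)^{-\mu}\|\Pi_Y f\|_{\infty,Y}\Big).
\end{equation*}

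Finally I would sum the local estimates: raising to the $q$-th power (or taking the supremum when $q=\infty$), using the bounded overlap of the cover so that $\sum_i\|f\|_{W^{m,2}(D_i)}^{2}\lesssim\|f\|_{W^{m,2}(\Omega)}^{2}$, using $\operatorname{diam}(D_i)\sim h_Y$ for every $i$, and counting roughly $h_Y^{-d}|\Omega|$ patches, the $h_Y$-powers collect into $h_Y^{\,m-\mu-d(1/2-1/q)_{+}}$ in front of $\|f\|_{W^{m,2}(\Omega)}$ and into $h_Y^{-\mu}$ in front of $\|\Pi_Y f\|_{\infty,Y}$, while the interior-cone geometry makes the global $W^{\mu,q}$ norm comparable to the $\ell^{q}$ aggregate of the local ones. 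The main obstacle is the exponent bookkeeping through the two rescalings: one must track precisely how the $W^{m,2}$, $W^{\mu,q}$ and $C^{0}$ norms transform under dilation of $D_i$, and simultaneously ensure that the polynomial norming constant and the Bramble--Hilbert constant are genuinely scale-free and uniform over the cover; constructing that cover with controlled overlap together with a well-distributed $Y_i$ in each patch, using only the interior cone condition, is the other delicate point and is what pins down $h_0$.
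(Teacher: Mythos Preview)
The paper does not give its own proof of this lemma; it simply quotes the result and attributes it to \cite{nar06,rieger10}. So there is no in-paper argument to compare against. Your sketch, however, is precisely the strategy used in those references: cover $\Omega$ by overlapping star-shaped patches of diameter $\sim h_Y$, use a local polynomial reproduction (averaged Taylor polynomial together with a Bramble--Hilbert/Dupont--Scott bound) on each patch, control the polynomial by its values on $Y_i$ via a norming-set/Markov-type inequality for polynomials, and then sum using bounded overlap and the patch count $\sim h_Y^{-d}$. The scaling bookkeeping you describe is exactly what produces the exponents $m-\mu-d(1/2-1/q)_{+}$ and $-\mu$, and your identification of the two delicate points (uniformity of the norming and Bramble--Hilbert constants under rescaling, and construction of the cover with controlled overlap from the interior cone condition alone) matches where the real work lies in those papers. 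In short, your proposal is correct and is essentially the proof the paper is citing.
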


\section{Meshfree discretization}

\subsection{Well-posedness}
There are several theories which describe the regularity and the well-posedness of the Monge-Amp$\grave{\textrm{e}}$re equation
\cite{caff,WANG,FIG}, for example the existence theorem from \cite{FIG}:
\begin{theorem}\label{exit}(\textbf{existence})
Let $\Omega$ be a uniformly convex domain with $C^{k+2,\gamma}-$boundary, $k\geq2,\gamma\in(0,1)$. Let $f\in C^{k,\gamma}(\bar{\Omega})$
be strictly positive. Then for any $g\in C^{k+2,\gamma}(\partial\Omega)$, there exist a unique solution $u^{*}\in C^{k+2,\gamma}(\bar{\Omega})$
to the Dirichlet problem (\ref{ma1})-(\ref{ma2}).
\end{theorem}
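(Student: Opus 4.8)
The plan is to follow the classical route: reduce solvability to a uniform a priori estimate and then invoke the method of continuity. Uniqueness is the easy half. If $u_{1},u_{2}$ are convex solutions with $u_{1}=u_{2}=g$ on $\partial\Omega$, then at an interior point where $u_{1}-u_{2}$ attains a positive maximum one has $D(u_{1}-u_{2})=0$ and $D^{2}(u_{1}-u_{2})\leq 0$, which contradicts the strict monotonicity of $A\mapsto\det A$ on the cone of positive definite symmetric matrices; equivalently, one applies the comparison principle for the degenerate elliptic operator $F$ restricted to convex functions. Hence $u_{1}\equiv u_{2}$.

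For existence I would embed the problem in the one‑parameter family $Fu_{t}:=\det(D^{2}u_{t})=f_{t}$ in $\Omega$, $u_{t}=g_{t}$ on $\partial\Omega$, with $f_{t}:=(1-t)+tf$ (still strictly positive and in $C^{k,\gamma}(\bar\Omega)$) and $g_{t}$ interpolating linearly between $q|_{\partial\Omega}$ for an explicit convex quadratic $q$ with $\det D^{2}q=1$ (at $t=0$) and the given data $g$ (at $t=1$). Let $S\subseteq[0,1]$ be the set of parameters for which a convex solution $u_{t}\in C^{k+2,\gamma}(\bar\Omega)$ exists; then $0\in S$, so $S\neq\emptyset$. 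Openness of $S$ follows from the implicit function theorem in Hölder spaces: the Fréchet derivative of $F$ at a convex $u_{t}$ is the linear operator $v\mapsto \mathrm{cof}(D^{2}u_{t}):D^{2}v$, whose coefficient matrix $\mathrm{cof}(D^{2}u_{t})$ is positive definite once $D^{2}u_{t}>0$; this operator has $C^{k,\gamma}$ coefficients and is therefore, by linear Schauder theory, an isomorphism from $\{v\in C^{k+2,\gamma}(\bar\Omega):v|_{\partial\Omega}=0\}$ onto $C^{k,\gamma}(\bar\Omega)$, so solutions persist under small perturbations of $t$.

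The heart of the argument, and the step I expect to be the main obstacle, is the closedness of $S$, i.e. an estimate $\|u_{t}\|_{C^{k+2,\gamma}(\bar\Omega)}\leq C$ with $C$ independent of $t$. I would build this in the usual hierarchy: (i) a $C^{0}$ bound from the maximum principle together with convex barriers constructed from the uniformly convex domain; (ii) a $C^{1}$ bound, reducing the gradient estimate to $\partial\Omega$ by convexity and then using sub/super‑solutions adapted to the boundary; (iii) the crucial $C^{2}$ bound — a Pogorelov‑type interior second‑derivative estimate for $\det D^{2}u$, combined with a boundary estimate in which the uniform convexity of $\Omega$ is exactly what controls the pure tangential, mixed tangential–normal, and normal–normal second derivatives on $\partial\Omega$. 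Because $f_{t}\geq c>0$, the bound $\det D^{2}u_{t}\geq c$ together with the upper Hessian bound yields two‑sided control of the eigenvalues of $D^{2}u_{t}$, so the equation is \emph{uniformly} elliptic along the whole family. (iv) Given uniform ellipticity and the $C^{2}$ bound, the Evans–Krylov theorem upgrades to a uniform interior‑and‑boundary $C^{2,\alpha}$ estimate; differentiating the equation and applying linear Schauder estimates iteratively then bootstraps to the uniform $C^{k+2,\gamma}$ bound. Closedness of $S$ follows by Arzelà–Ascoli, and since $S$ is nonempty, open, and closed in $[0,1]$ we get $1\in S$, which is precisely the assertion; the regularity $u^{*}\in C^{k+2,\gamma}(\bar\Omega)$ is exactly what the a priori estimate delivers (this is the Caffarelli–Nirenberg–Spruck programme carried out in \cite{FIG}).
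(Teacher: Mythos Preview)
The paper does not prove this theorem at all: it is quoted verbatim as a known result from \cite{FIG} (see also \cite{caff,WANG}), so there is no ``paper's own proof'' to compare against. Your sketch is the classical Caffarelli--Nirenberg--Spruck programme (method of continuity, openness via linearisation and Schauder, closedness via the hierarchy $C^{0}\to C^{1}\to C^{2}\to C^{2,\alpha}$ and Evans--Krylov, then bootstrap), which is precisely the argument presented in the cited references; nothing is missing and the approach is the standard one.
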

In the later discussion, we always assume that the conditions of Theorem \ref{exit} are satisfied.
And we assume that the solution of the equations (\ref{2ma1})-(\ref{2ma2}) has the higher regularity $u^{*}\in W^{\sigma,2}(\Omega)$
with $\sigma$ such that $\lfloor \sigma\rfloor>k+2+d/2$. Of course, $u^{*}\in C^{k+2,\gamma}(\overline{\Omega})$ according to Sobolev inequality from section 5.6.3 of \cite{evans}. In addition, $F$ is Fr$\acute{\textrm{e}}$chet differentiable in
$K_{R}(u^{*})=\{u\in W^{\sigma,2}(\Omega):\|u-u^{*}\|_{W^{\sigma,2}(\Omega)}\leq R\}\subset D(F)$, and
\begin{equation}
F'(u^{*})(v)=u^{*}_{yy}v_{xx}+u^{*}_{xx}v_{yy}-2u^{*}_{xy}v_{xy}.
\end{equation}
Obviously, the convex property of $u^{*}$ implies that the linear operator $F'(u^{*})$ is uniformly elliptic.
Thus, we can obtain a prior bounds of the fully nonlinear problem (\ref{2ma1})-(\ref{2ma2}) by utilizing the prior estimate of the linear elliptic problem
\begin{eqnarray}
 F'(u)v(x,y) &=& f(x,y), \quad\textrm{in}\ \Omega\subset \mathbb{R}^{2},\label{xian1}\\
  v(x,y) &=& g(x,y),\quad \textrm{on}\ \partial\Omega,\label{xian2}
\end{eqnarray}
here $u\in K_{R}(u^{*})$.
\begin{theorem}\label{pp}(\textbf{a priori bounds of (\ref{2ma1})-(\ref{2ma2})})
Suppose $\Omega\subseteq\mathbb{R}^{d}, d=2$ is a uniformly convex domain with
$C^{k+2,\gamma}-$boundary. Then the Frech$\acute{\textrm{e}}$t
derivative $F'(u)$ at $u$ be bounded and Lipschitz continuous, namely
\begin{equation}\label{lip}
\|F'(u)-F'(v)\|:=\|F'(u^{*})u-F'(u^{*})v\|_{L^{2}(\Omega)}\leq C''\|u-v\|_{W^{\sigma,2}(\Omega)},\quad \textrm{for all}\ u,v\in K_{R}(u^{*}).
\end{equation}
Assume the radius $R$ to be small enough to satisfy
\begin{equation}\label{ccon}
C_{l}C''R\leq\frac{1}{2},
\end{equation}
then for any $u,v\in K_{R}(u^{*})$, there exists two constants $C,C_{b}$ such that
\begin{equation}\label{xiajie}
\|u-v\|_{L^{2}(\Omega)}\leq C\{\|Fu-Fv\|_{L^{2}(\Omega)}+\|u-v\|_{W^{1/2,2}(\partial\Omega)}\},
\end{equation}
\begin{equation}\label{shangjie}
\|Fu-Fv\|_{W^{\sigma-2,2}(\Omega)}\leq C_{b}\|u-v\|_{W^{\sigma,2}(\Omega)}.
\end{equation}
\end{theorem}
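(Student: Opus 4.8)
The plan is to reduce the whole statement to one exact algebraic identity plus standard Sobolev multiplier estimates. In two dimensions $F'(\phi)(w)=\mathrm{cof}(D^{2}\phi):D^{2}w=\phi_{yy}w_{xx}+\phi_{xx}w_{yy}-2\phi_{xy}w_{xy}$ is bilinear and symmetric in the pair of Hessians (so $F'(\phi)(w)=F'(w)(\phi)$), and $\phi\mapsto\det D^{2}\phi$ is exactly its associated quadratic form; polarizing and using that $\mathrm{cof}$ is linear on $2\times2$ matrices gives
\[
Fu-Fv=\tfrac12\bigl(\mathrm{cof}(D^{2}u)+\mathrm{cof}(D^{2}v)\bigr):D^{2}(u-v)=F'\!\bigl(\tfrac{u+v}{2}\bigr)(u-v).
\]
Hence $w:=u-v$ solves a \emph{linear} elliptic problem $L_{u,v}w=Fu-Fv$ in $\Omega$, $w=u-v$ on $\partial\Omega$, with $L_{u,v}=F'(\tfrac{u+v}{2})$. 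I would also record the Piola identity $\sum_{i}\partial_{i}(\mathrm{cof}(D^{2}\phi))_{ij}=0$, which puts every operator of this form in divergence form $L_{u,v}w=\mathrm{div}\!\bigl(\mathrm{cof}(D^{2}\tfrac{u+v}{2})\nabla w\bigr)$; this is what makes the perturbation step below close.

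The bounds (\ref{lip}) and (\ref{shangjie}) are then pure multiplier estimates. Since $\lfloor\sigma\rfloor>k+2+d/2$ we have $\sigma-2>d/2$, so $W^{\sigma-2,2}(\Omega)$ is a multiplication algebra embedding into $C^{0,\gamma}(\overline\Omega)$; also $u^{*}\in C^{k+2,\gamma}(\overline\Omega)$ and $\|u\|_{W^{\sigma,2}(\Omega)}\le\|u^{*}\|_{W^{\sigma,2}(\Omega)}+R=:M$ for $u\in K_{R}(u^{*})$. Then (\ref{lip}) follows from $F'(u^{*})u-F'(u^{*})v=\mathrm{cof}(D^{2}u^{*}):D^{2}(u-v)$ together with $\|\mathrm{cof}(D^{2}u^{*})\|_{L^{\infty}}\le C\|u^{*}\|_{C^{2}}$ and $\|D^{2}(u-v)\|_{L^{2}}\le\|u-v\|_{W^{\sigma,2}}$ (boundedness of $F'(u)$ is the same computation with $u$ replacing $u^{*}$). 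For (\ref{shangjie}) I expand $Fu-Fv=u_{xx}(u_{yy}-v_{yy})+v_{yy}(u_{xx}-v_{xx})-(u_{xy}+v_{xy})(u_{xy}-v_{xy})$: every summand is a product of a second derivative of $u$ or $v$ (bounded by $M$ in $W^{\sigma-2,2}$) with a second derivative of $u-v$ (bounded by $\|u-v\|_{W^{\sigma,2}}$ in $W^{\sigma-2,2}$), and the algebra property gives $\|Fu-Fv\|_{W^{\sigma-2,2}}\le C_{b}\|u-v\|_{W^{\sigma,2}}$.

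For the lower bound (\ref{xiajie}) I perturb the reference linearization $F'(u^{*})$. Convexity of $u^{*}$ and $\det D^{2}u^{*}=f>0$ force $D^{2}u^{*}\ge\lambda_{0}I$ on $\overline\Omega$, so (in $2$D the Hessian and its cofactor share eigenvalues) $\mathrm{cof}(D^{2}u^{*})\ge\lambda_{0}I$; since $\|D^{2}(u-u^{*})\|_{L^{\infty}}\le C\|u-u^{*}\|_{W^{\sigma,2}}\le CR$, shrinking $R$ keeps $\mathrm{cof}(D^{2}\tfrac{u+v}{2})$ uniformly elliptic with uniformly bounded Hölder coefficients over all $u,v\in K_{R}(u^{*})$, so the a priori estimate for the linear problem (\ref{xian1})--(\ref{xian2}) holds with one constant $C_{l}$. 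Writing $L_{u,v}=F'(u^{*})+P$ with $Pw=\tfrac12\bigl(\mathrm{cof}(D^{2}(u-u^{*}))+\mathrm{cof}(D^{2}(v-u^{*}))\bigr):D^{2}w=\mathrm{div}(M\nabla w)$, $\|M\|_{L^{\infty}}\le CR$, the divergence form lets $P$ map into the weak norm on the right of the linear estimate with $\|Pw\|\le C''R\,\|w\|_{L^{2}(\Omega)}$, by the same multiplier bound as (\ref{lip}). Applying the linear estimate to $w=u-v$, which solves $F'(u^{*})w=(Fu-Fv)-Pw$ with boundary data $u-v$, I get
\[
\|u-v\|_{L^{2}(\Omega)}\le C_{l}\bigl(\|Fu-Fv\|_{L^{2}(\Omega)}+C''R\,\|u-v\|_{L^{2}(\Omega)}+\|u-v\|_{W^{1/2,2}(\partial\Omega)}\bigr),
\]
and the smallness hypothesis (\ref{ccon}), $C_{l}C''R\le\tfrac12$, absorbs the middle term and yields (\ref{xiajie}) with $C=2C_{l}$.

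The two multiplier estimates are routine. The genuine obstacle is this last step: one must set the a priori bound for (\ref{xian1})--(\ref{xian2}) in a function-space pair ($W^{1,2}$--$W^{-1,2}$, or $L^{2}$--$W^{-2,2}$ by duality against the $W^{2,2}$-regular adjoint, legitimate since the coefficients are only $C^{0,\gamma}$) in which $P$ acts as a \emph{small, lower-order} perturbation; this is exactly why the divergence-form rewriting from the Piola identity is indispensable, since otherwise $Pw$ carries $D^{2}w$ and cannot be absorbed on the left. A secondary but crucial point is that the embedding $W^{\sigma-2,2}\hookrightarrow C^{0,\gamma}(\overline\Omega)$ forced by $\lfloor\sigma\rfloor>k+2+d/2$ is what keeps the Hessians of all competitors in $K_{R}(u^{*})$ uniformly bounded and uniformly close to $D^{2}u^{*}$, so that the ellipticity constant, and hence $C_{l},C'',C_{b}$, are uniform over the ball.
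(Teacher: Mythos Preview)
Your proposal is correct and shares the paper's skeleton---reduce (\ref{xiajie}) to a linear a priori estimate plus a small perturbation of size $O(R)$, then absorb---but the mechanism differs in two places. First, the paper does not use your exact $2$D polarization identity $Fu-Fv=F'\bigl(\tfrac{u+v}{2}\bigr)(u-v)$; instead it invokes the linear estimate at $F'(u)$ (rather than at $F'(u^{*})$), writes $F'(u)(u-v)=(Fu-Fv)+\bigl(F'(u)(u-v)-Fu+Fv\bigr)$, and bounds the parenthesis as a Taylor remainder via the Lipschitz property (\ref{lip}). Second, for (\ref{shangjie}) the paper again adds and subtracts $F'(u)(v-u)$ and combines Lipschitz continuity with boundedness of $F'(u^{*})$, arriving at $C_{b}=2C''R+\|F'(u^{*})\|$; you instead expand $Fu-Fv$ explicitly and apply the multiplication-algebra property of $W^{\sigma-2,2}$ directly. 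Your Piola-identity/divergence-form rewriting and the explicit discussion of which function-space pair makes the absorption close are more careful than the paper, which simply records the remainder bound as $C''R\,\|u-v\|_{L^{2}(\Omega)}$ without isolating why the second derivatives of $u-v$ appearing in the perturbation can be absorbed on the left. The trade-off: the paper's Taylor-based framework is dimension-agnostic in form (the Lipschitz-derivative argument would survive $d>2$, where $\det$ is no longer quadratic), whereas your polarization identity is special to $d=2$ but makes the linear reduction exact and the absorption step transparently valid.
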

\begin{proof}
The inequality (\ref{lip}) is obvious because $\|F'(u)-F'(v)\|$ can be bounded by
\begin{align*}
\|F'(u^{*})u-F'(u^{*})v\|_{L^{2}(\Omega)}&=
\|u^{*}_{xx}(u-v)_{yy}+u^{*}_{yy}(u-v)_{xx}-2u^{*}_{xy}(u-v)_{xy}\|_{L^{2}(\Omega)}\\[4pt]
&\leq |u^{*}|_{W^{2,2}(\Omega)}|u-v|_{W^{2,2}(\Omega)}\\[4pt]
&\leq C^{''}\|u-v\|_{W^{\sigma,2}(\Omega)}.
\end{align*}
With the help of the priori inequality of linear elliptic problems (for example Corollary 8.7 of \cite{gt},), we have
\begin{equation*}
\|u-v\|_{L^{2}(\Omega)}\leq C_{l}\{\|F'(u)(u-v)\|_{L^{2}(\Omega)}+\|u-v\|_{W^{1/2,2}(\partial\Omega)}\}.
\end{equation*}
Then by using Taylor formula and Lipschitz continuity of $F'(u)$, we have
\begin{align*}
\|u-v\|_{L^{2}(\Omega)}&\leq C_{l}\{\|F'(u)(u-v)-Fu+Fv\|_{L^{2}(\Omega)}+\|Fu-Fv\|_{L^{2}(\Omega)}
+\|u-v\|_{W^{1/2,2}(\partial\Omega)}\}\\[4pt]
&\leq C_{l}\{\sup_{w\in\overline{uv}}\|F'(w)-F'(v)\|\|u-v\|_{L^{2}(\Omega)}+\|Fu-Fv\|_{L^{2}(\Omega)}
+\|u-v\|_{W^{1/2,2}(\partial\Omega)}\}\\[4pt]
&\leq C_{l}\{C''\|u-v\|_{W^{\sigma,2}(\Omega)}\|u-v\|_{L^{2}(\Omega)}+\|Fu-Fv\|_{L^{2}(\Omega)}
+\|u-v\|_{W^{1/2,2}(\partial\Omega)}\}\\[4pt]
&\leq C_{l}\{C''R\|u-v\|_{L^{2}(\Omega)}+\|Fu-Fv\|_{L^{2}(\Omega)}
+\|u-v\|_{W^{1/2,2}(\partial\Omega)}\}.
\end{align*}
This yields the result (\ref{xiajie}). In addition,
\begin{align*}
\|Fu-Fv\|_{W^{\sigma-2,2}(\Omega)}&\leq \|Fv-Fu-F'(u)(v-u)\|_{W^{\sigma-2,2}(\Omega)}+\|F'(u)(v-u)\|_{W^{\sigma-2,2}(\Omega)}\\[4pt]
&\leq C''R \|u-v\|_{W^{\sigma,2}(\Omega)}+(C''R+\|F'(u^{*})\|) \|u-v\|_{W^{\sigma,2}(\Omega)}\\[4pt]
&\leq (2C''R+\|F'(u^{*})\|)\|u-v\|_{W^{\sigma,2}(\Omega)}.
\end{align*}
Setting $C_{b}=2C''R+\|F'(u^{*})\|$ yields (\ref{shangjie}).
\end{proof}
\subsection{Collocation}
We are now consider the meshfree collocation for the Monge-Amp$\grave{\textrm{e}}$re equation (\ref{2ma1})-(\ref{2ma2}).
Denote $\textbf{x}=(x,y)$ and $\textbf{x}_{i}=(x_{i},y_{i})$, we use a set $X=X^{I}\cup X^{B}$ with
$X^{I}=\{\textbf{x}_{1},\textbf{x}_{2},\ldots,\textbf{x}_{M_{I}}\}\subseteq \Omega$ and $X^{B}=\{\textbf{x}_{M_{I}+1},\textbf{x}_{M_{I}+2},\ldots,\textbf{x}_{M}\}\subseteq \partial\Omega$ as collocation points,
and use
$$s_{I}:=\sup_{\textbf{x}\in\Omega}\min_{\textbf{x}_{j}\in X^{I}}\|\textbf{x}-\textbf{x}_{j}\|_{2},$$
$$s_{B}:=\max_{1\leq j\leq K}s_{Bj}=\max_{1\leq j\leq K}\left\{\sup_{\textbf{x}\in B=B(\textbf{0},1)}
\min_{\textbf{x}_{j}\in \psi_{j}^{-1}\left(X^{B}\bigcap V_{j}\right)}\|\textbf{x}-\textbf{x}_{j}\|_{2}\right\}$$
to represent the fill distance of the data sites $X^{I}$ and $X^{B}$ respectively. Obviously,
they have the parallel definitions with $h_{I}$ and $h_{B}$ on trial side. Hence choose $s_{X}=\max\{s_{I},s_{B}\}<1$.
Then we can construct a approximate function with the form
$$s(\textbf{x})=\sum_{j=1}^{N}c_{j}\Phi_{\delta}(\textbf{x}-\textbf{y}_{j}),\quad \textbf{y}_{j}\in Y=Y^{I}\cup Y^{B}\subset\mathbb{R}^{2},$$
which satisfies
\begin{align}
Fs(x_{i},y_{i}) & = f(x_{i},y_{i}),\quad (x_{i},y_{i})\in X^{I},\quad i=1,\ldots,M_{I},
\label{ls1} \\[4pt]
s(x_{i},y_{i}) & = g(x_{i},y_{i}), \quad (x_{i},y_{i})\in X^{B},\quad i=M_{I+1},\ldots,M.
\label{ls2}
\end{align}
(\ref{ls1})-(\ref{ls2}) is a meshfree discretization of the equation (\ref{2ma1})-(\ref{2ma2}),
but with a nonsquare nonlinear systems. Let $\Pi_{X^{I}}$ and $\Pi_{X^{B}}$ be
the function evaluation operators at the data sites $X^{I}$ and $X^{B}$ respectively,
then the nonlinear systems (\ref{ls1})-(\ref{ls2}) can also be rewritten as
\begin{align}
\Pi_{X^{I}}Fs& =\Pi_{X^{I}}f = \Pi_{X^{I}}Fu^{*},
\label{61} \\[4pt]
\Pi_{X^{B}}s & = \Pi_{X^{B}}g =\Pi_{X^{B}}u^{*}.
\label{62}
\end{align}
Because the nonlinear systems (\ref{61})-(\ref{62}) is overdetermined, thus we only require that one of the nonlinear numerical methods is clever enough to produce a $s\in W_{\delta}\cap K_{R}(u^{*})$ with the given tolerance
\begin{equation}\label{tolerance}
\max\left\{\|\Pi_{X^{I}}(Fu^{*}-Fs)\|_{\infty,X^{I}},\|\Pi_{X^{B}}(u^{*}-s)\|_{\infty,X^{B}}\right\}\leq \emph{tol}.
\end{equation}
This can be done by residual minimization
\begin{equation}\label{residual}
s=\arg\min\left\{\|\Pi_{X^{I}}(Fu^{*}-Fs)\|_{\infty,X^{I}},\|\Pi_{X^{B}}(u^{*}-s)\|_{\infty,X^{B}}:
s\in  W_{\delta}\cap K_{R}(u^{*})\right\}.
\end{equation}
\section{Convergence analysis}
The following theorem is our main theoretical result which proves the convergence rate of meshfree method for the
Monge-Amp$\grave{\textrm{e}}$re equation (\ref{2ma1})-(\ref{2ma2}).
\begin{theorem}
Let $\Omega\subseteq \mathbb{R}^{d}, d=2$ be a uniformly convex domain has a $C^{k+2,\gamma}-$boundary with $\gamma\in[0,1)$.
In the equation (\ref{2ma1})-(\ref{2ma2}), let $f\in C^{k,\gamma}(\bar{\Omega})$
be strictly positive and $g\in C^{k+2,\gamma}(\partial\Omega)$. We denote the unique solution of (\ref{2ma1})-(\ref{2ma2}) as $u^{*}\in W^{\sigma,2}(\Omega)$, $\lfloor \sigma\rfloor>k+2+d/2$.
Assume the radius $R$ of the ball $K_{R}$ to be small enough to satisfy (\ref{ccon}).
Let $Y$ be
quasi-uniform in the sense that $h_{Y}\leq cq_{Y}$. Let the collocation sites $X$ also be quasi-uniform.
Let $s_{I}$ and $s_{B}$ be the fill distance of the data sites $X^{I}$ and $X^{B}$
respectively, and $s_{X}=\max\{s_{I},s_{B}\}$. Suppose $s\in W_{\delta}$ is the
collocation solution of (\ref{ls1})-(\ref{ls2}) obtained by choosing
\begin{equation}\label{tolerance22}
tol= C\cdot s^{1/2}_{B}\delta^{-\sigma}h_{Y}^{\sigma-2-d/2}\|u^{*}\|_{W^{\sigma,2}(\Omega)}.
\end{equation}
If the testing discretizations is finer than the trial discretization and
satisfies condition
\begin{equation}\label{condition}
C\cdot C_{b}\cdot \delta^{\sigma}s_{X}^{\sigma-2}h_{Y}^{-\sigma+d/2}<\frac{1}{2},
\end{equation}
then there exists a constant $C>0$ such that
\begin{equation*}\label{46}
\|u^{*}-s\|_{L^{2}(\Omega)}\leq
C\delta^{-\sigma}h^{\sigma-3}_{Y}\|u^{*}\|_{W^{\sigma,2}(\Omega)}.
\end{equation*}
\end{theorem}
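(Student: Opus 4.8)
The plan is to run a stability--consistency argument in the spirit of the B\"ohmer/Schaback nonlinear discretization theory: stability will come from the a priori bounds of Theorem~\ref{pp}, consistency from the two sampling inequalities of Lemma~\ref{wl3}, and the coupling term between them (a high-order Sobolev norm of $u^{*}-s$) will be removed with the inverse inequality of Lemma~\ref{invv} together with the smallness hypothesis~(\ref{condition}). Because the collocation solution lies in $W_{\delta}\cap K_{R}(u^{*})$, the estimates (\ref{xiajie}) and (\ref{shangjie}) are available for the pair $u^{*},s$, and (\ref{xiajie}) immediately gives
\[
\|u^{*}-s\|_{L^{2}(\Omega)}\le C\big(\|Fu^{*}-Fs\|_{L^{2}(\Omega)}+\|u^{*}-s\|_{W^{1/2,2}(\partial\Omega)}\big),
\]
so everything reduces to estimating the interior residual $\|Fu^{*}-Fs\|_{L^{2}(\Omega)}$ and the boundary residual $\|u^{*}-s\|_{W^{1/2,2}(\partial\Omega)}$.

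For the interior residual I would first observe, via (\ref{shangjie}), that $Fu^{*}-Fs\in W^{\sigma-2,2}(\Omega)$ with norm at most $C_{b}\|u^{*}-s\|_{W^{\sigma,2}(\Omega)}$, and that $\lfloor\sigma\rfloor>k+2+d/2$ forces $d/2<\lfloor\sigma-2\rfloor$, so Lemma~\ref{wl3} applies with $\mu=0$, $q=2$, $m=\sigma-2$ on the interior collocation set $X^{I}$ (fill distance $s_{I}\le s_{X}$). Together with (\ref{shangjie}) and the tolerance~(\ref{tolerance}) this yields
\[
\|Fu^{*}-Fs\|_{L^{2}(\Omega)}\le C\,C_{b}\,s_{X}^{\sigma-2}\,\|u^{*}-s\|_{W^{\sigma,2}(\Omega)}+C\,\mathit{tol}.
\]
For the boundary residual I would transfer $u^{*}-s$ through the charts $\psi_{j}:B\to V_{j}$, apply Lemma~\ref{wl3} on $B\subseteq\mathbb{R}^{d-1}$ with $\mu=1/2$, $q=2$ and an $m$ as large as the $C^{k+2,\gamma}$ regularity of $\partial\Omega$ and the trace embedding $W^{m+1/2,2}(\Omega)\hookrightarrow W^{m,2}(\partial\Omega)$ permit, sum over $j$ and push forward, obtaining
\[
\|u^{*}-s\|_{W^{1/2,2}(\partial\Omega)}\le C\,s_{B}^{m-1/2}\,\|u^{*}-s\|_{W^{\sigma,2}(\Omega)}+C\,s_{B}^{-1/2}\,\mathit{tol}.
\]
Here the factor $s_{B}^{1/2}$ that was deliberately placed into the tolerance~(\ref{tolerance22}) cancels the $s_{B}^{-1/2}$, so that $s_{B}^{-1/2}\mathit{tol}=C\,\delta^{-\sigma}h_{Y}^{\sigma-2-d/2}\|u^{*}\|_{W^{\sigma,2}(\Omega)}=C\,\delta^{-\sigma}h_{Y}^{\sigma-3}\|u^{*}\|_{W^{\sigma,2}(\Omega)}$ when $d=2$, which is exactly the bound to be proved.

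Assembling the three displays, bounding $\mathit{tol}\le C\,\delta^{-\sigma}h_{Y}^{\sigma-3}\|u^{*}\|_{W^{\sigma,2}(\Omega)}$ and using $s_{B}\le s_{X}<1$, I arrive at
\[
\|u^{*}-s\|_{L^{2}(\Omega)}\le C\,C_{b}\,s_{X}^{\sigma-2}\,\|u^{*}-s\|_{W^{\sigma,2}(\Omega)}+C\,\delta^{-\sigma}h_{Y}^{\sigma-3}\|u^{*}\|_{W^{\sigma,2}(\Omega)}.
\]
The remaining job is to handle $\|u^{*}-s\|_{W^{\sigma,2}(\Omega)}$, which cannot be controlled by $\|u^{*}-s\|_{L^{2}(\Omega)}$ directly since $u^{*}\notin W_{\delta}$. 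I would split $u^{*}-s=(u^{*}-\widetilde u)+(\widetilde u-s)$ using a scaled kernel approximant $\widetilde u\in W_{\delta}$ of $u^{*}$, apply the inverse inequality~(\ref{inv}) to $\widetilde u-s\in W_{\delta}$, and estimate the $\widetilde u$-pieces with the approximation properties of $W_{\delta}$ and with the a priori bound $\|u^{*}-s\|_{W^{\sigma,2}(\Omega)}\le R$. This turns $C\,C_{b}\,s_{X}^{\sigma-2}\|u^{*}-s\|_{W^{\sigma,2}(\Omega)}$ into a constant multiple of $C\,C_{b}\,\delta^{\sigma}s_{X}^{\sigma-2}h_{Y}^{-\sigma+d/2}\,\|u^{*}-s\|_{L^{2}(\Omega)}$ plus terms whose size is governed by $C\,C_{b}\,s_{X}^{\sigma-2}$. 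By~(\ref{condition}) the coefficient $C\,C_{b}\,\delta^{\sigma}s_{X}^{\sigma-2}h_{Y}^{-\sigma+d/2}$ is $<\tfrac12$, so the first can be moved to the left-hand side; and~(\ref{condition}) used once more, in the form $C\,C_{b}\,s_{X}^{\sigma-2}<\tfrac12\,\delta^{-\sigma}h_{Y}^{\sigma-1}\le\tfrac12\,\delta^{-\sigma}h_{Y}^{\sigma-3}$ (valid for $d=2$ since $h_{Y}<1$), keeps the second within $C\,\delta^{-\sigma}h_{Y}^{\sigma-3}\|u^{*}\|_{W^{\sigma,2}(\Omega)}$. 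Collecting everything gives $\|u^{*}-s\|_{L^{2}(\Omega)}\le C\,\delta^{-\sigma}h_{Y}^{\sigma-3}\|u^{*}\|_{W^{\sigma,2}(\Omega)}$.

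The main obstacle I anticipate is the bookkeeping of Sobolev exponents and scaling parameters in this last step: one must verify that after the two sampling inequalities and the inverse inequality every \emph{consistency} contribution --- the powers of $s_{X}$ multiplying the high Sobolev norm of $u^{*}-s$, and the boundary factor $s_{B}^{m-1/2}$ whose exponent is capped by the $C^{k+2,\gamma}$ boundary regularity --- is squeezed by~(\ref{condition}) below the $\delta^{-\sigma}h_{Y}^{\sigma-3}$ level already supplied by the tolerance term. This is why the quantitative hypothesis ``testing discretization finer than trial discretization''~(\ref{condition}) is essential rather than cosmetic, and why the exponents in~(\ref{condition}) and~(\ref{tolerance22}) are tuned exactly as stated. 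A subsidiary, more routine difficulty is carrying the sampling inequality faithfully from the curved boundary $\partial\Omega$ to the flat parameter balls $B$ through the diffeomorphisms $\psi_{j}$ together with the trace theorem.
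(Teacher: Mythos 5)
Your proposal follows the same overall stability--consistency--inverse-inequality framework as the paper, but it is organized around a genuinely different decomposition. The paper introduces the kernel interpolant $I_{h}u^{*}$ at the very start, writes $\|u^{*}-s\|_{L^{2}}\le\|u^{*}-I_{h}u^{*}\|_{L^{2}}+\|I_{h}u^{*}-s\|_{L^{2}}$, and applies the a priori bound (\ref{xiajie})--(\ref{shangjie}) to the pair $(I_{h}u^{*},s)$; since $I_{h}u^{*}-s\in W_{\delta}$, the inverse inequality (\ref{inv}) applies \emph{directly} to that difference and the smallness condition (\ref{condition}) absorbs it, while the price paid is extra consistency terms of the form $\|F(I_{h}u^{*})-Fu^{*}\|_{\infty,\Omega}$ and its boundary analogue, handled by sampling plus interpolation error. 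You instead apply (\ref{xiajie}) to $(u^{*},s)$ directly, which has two advantages: your discrete residuals reduce purely to the tolerance (\ref{tolerance}), and Theorem~\ref{pp} is only invoked for functions genuinely in $K_{R}(u^{*})$ (the paper tacitly needs $I_{h}u^{*}\in K_{R}(u^{*})$ without verifying it). The price is that you are left with $\|u^{*}-s\|_{W^{\sigma,2}(\Omega)}$, which is not a trial-space quantity, and must be repaired afterwards.

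That repair step is the one place where your plan is underspecified, and you should commit to one branch and state its cost. If you use the crude ball bound $\|u^{*}-s\|_{W^{\sigma,2}(\Omega)}\le R$, then (\ref{condition}) gives $C\,C_{b}\,s_{X}^{\sigma-2}R\le\tfrac{R}{2}\delta^{-\sigma}h_{Y}^{\sigma-1}\le\tfrac{R}{2}\delta^{-\sigma}h_{Y}^{\sigma-3}$ and the argument closes (and then the $\widetilde u$-splitting and absorption are not even needed for this piece); but the resulting constant carries $R$ rather than scaling with $\|u^{*}\|_{W^{\sigma,2}(\Omega)}$, so the stated bound only holds with a constant depending on $u^{*}$ and $R$ --- admittedly the same level of rigor as the paper, whose $C_{b}$ also depends on $u^{*}$. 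If instead you insist on the factor $\|u^{*}\|_{W^{\sigma,2}(\Omega)}$ by taking $\widetilde u=I_{h}u^{*}$ and using the stability bound $\|u^{*}-I_{h}u^{*}\|_{W^{\sigma,2}(\Omega)}\le C\delta^{-\sigma}\|u^{*}\|_{W^{\sigma,2}(\Omega)}$, the term $C\,C_{b}\,s_{X}^{\sigma-2}\|u^{*}-I_{h}u^{*}\|_{W^{\sigma,2}(\Omega)}$ becomes of order $\delta^{-2\sigma}h_{Y}^{\sigma-1}\|u^{*}\|_{W^{\sigma,2}(\Omega)}$ after using (\ref{condition}), which exceeds the claimed $\delta^{-\sigma}h_{Y}^{\sigma-3}$ whenever $\delta^{-\sigma}>h_{Y}^{-2}$ (e.g.\ stationary scaling $\delta\sim h_{Y}$); so that branch does not recover the stated rate. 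Apart from this bookkeeping choice in the last step, your interior and boundary sampling estimates (exponents, the role of $s_{B}^{1/2}$ in the tolerance, the trace/chart argument) match the paper's and are sound.
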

\begin{proof}
We want to bound the $L^{2}-$error between the solution $u^{*}$ and its approximation $s$. We can use the radial basis functions  interpolant as a transition. The interpolant is a map $I_{h}: u^{*}\rightarrow I_{h}u^{*}\in W_{\delta}$ defined by
\begin{equation*}\label{interpolation}
I_{h}u^{*}(\textbf{y}_{k})=\sum_{j=1}^{N}c_{j}\Phi_{\delta}(\textbf{y}_{k}-\textbf{y}_{j})=u^{*}(\textbf{y}_{k}),\quad \textbf{y}_{k}\in Y\subset\mathbb{R}^{2}, k=1,2,\ldots,N,
\end{equation*}
for the unknown coefficients $c_{1},c_{2},\ldots,c_{N}$ are determined by solving the above linear system. Then the $L^{2}-$error between the solution $u^{*}$ and its approximation $s$ can be split into
\begin{align*}
\|u^{*}-s\|_{L^{2}(\Omega)}&\leq \|u^{*}-I_{h}u^{*}\|_{L^{2}(\Omega)}+\|I_{h}u^{*}-s\|_{L^{2}(\Omega)}.
\end{align*}
The first term on the right-hand side is the radial basis functions interpolation error, which can be easily obtained by using the sampling inequality (Lemma \ref{wl3}) and the stability of $I_{h}$ (Lemma 5.1 of \cite{LIU23}), namely
\begin{align}\label{zz}
\|u^{*}-I_{h}u^{*}\|_{L^{2}(\Omega)}\leq Ch_{Y}^{\sigma}\|u^{*}-I_{h}u^{*}\|_{W^{\sigma,2}(\Omega)}\leq
 C\delta^{-\sigma}h_{Y}^{\sigma}\|u^{*}\|_{W^{\sigma,2}(\Omega)}.
\end{align}
Hence, our main task in this proof is to bound the second term on the right-hand side.

Choosing $u=I_{h}u^{*},v=s$ and squaring both sides of (\ref{xiajie}), we have
\begin{equation*}
\|I_{h}u^{*}-s\|_{L^{2}(\Omega)}^{2}\leq 2C\{\|F(I_{h}u^{*})-Fs\|_{L^{2}(\Omega)}^{2}+\|I_{h}u^{*}-s\|_{W^{1/2,2}(\partial\Omega)}^{2}\}.
\end{equation*}
Then we can bound the two terms on the right-hand side separately. For the first term, using Lemma \ref{wl3},
the inequality (\ref{shangjie}), Lemma \ref{invv}, and the stability of radial basis functions interpolation (Lemma 5.1 of \cite{LIU23}) yields
\begin{align*}
\|F(I_{h}u^{*})-Fs\|_{L^{2}(\Omega)}^{2}&\leq C\left\{s_{I}^{2\sigma-4}\|F(I_{h}u^{*})-Fs\|^{2}_{W^{\sigma-2,2}(\Omega)}
+\|\Pi_{X^{I}}(F(I_{h}u^{*})-Fs)\|^{2}_{\infty,X^{I}}\right\}
\\[4pt]
&\leq C\cdot C_{b}^{2}\cdot s_{I}^{2\sigma-4}\|I_{h}u^{*}-s\|^{2}_{W^{\sigma,2}(\Omega)}
+C\|\Pi_{X^{I}}(F(I_{h}u^{*})-Fu^{*})\|^{2}_{\infty,X^{I}}+C\cdot tol^{2}
\\[4pt]
&= C\cdot C_{b}^{2}\cdot s_{I}^{2\sigma-4}\|I_{h}u^{*}-s\|^{2}_{W^{\sigma,2}(\Omega)}
+C\|F(I_{h}u^{*})-Fu^{*}\|^{2}_{\infty,X^{I}}+C\cdot tol^{2}
\\[4pt]
&\leq C\cdot C_{b}^{2}\cdot\delta^{2\sigma}s_{I}^{2\sigma-4}h_{Y}^{-2\sigma+d}\|I_{h}u^{*}-s\|^{2}_{L^{2}(\Omega)}
+C\|F(I_{h}u^{*})-Fu^{*}\|^{2}_{\infty,\Omega}+C\cdot tol^{2}
\\[4pt]
&\leq C\cdot C_{b}^{2}\cdot\delta^{2\sigma}s_{X}^{2\sigma-4}h_{Y}^{-2\sigma+d}\|I_{h}u^{*}-s\|^{2}_{L^{2}(\Omega)}
+C\|I_{h}u^{*}-u^{*}\|^{2}_{W^{2,\infty}(\Omega)}+C\cdot tol^{2}\\[4pt]
&\leq C\cdot C_{b}^{2}\cdot\delta^{2\sigma}s_{X}^{2\sigma-4}h_{Y}^{-2\sigma+d}\|I_{h}u^{*}-s\|^{2}_{L^{2}(\Omega)}
+Ch_{I}^{2(\sigma-2-d/2)}\|I_{h}u^{*}-u^{*}\|^{2}_{W^{\sigma,2}(\Omega)}+C\cdot tol^{2}\\[4pt]
&\leq C\cdot C_{b}^{2}\cdot\delta^{2\sigma}s_{X}^{2\sigma-4}h_{Y}^{-2\sigma+d}\|I_{h}u^{*}-s\|^{2}_{L^{2}(\Omega)}
+C\delta^{-2\sigma}h_{I}^{2(\sigma-2-d/2)}\|u^{*}\|^{2}_{W^{\sigma,2}(\Omega)}+C\cdot tol^{2}\\[4pt]
&\leq C\cdot C_{b}^{2}\cdot\delta^{2\sigma}s_{X}^{2\sigma-4}h_{Y}^{-2\sigma+d}\|I_{h}u^{*}-s\|^{2}_{L^{2}(\Omega)}
+C\delta^{-2\sigma}h_{Y}^{2(\sigma-2-d/2)}\|u^{*}\|^{2}_{W^{\sigma,2}(\Omega)}+C\cdot tol^{2}\\[4pt]
&\leq C\cdot C_{b}^{2}\cdot\delta^{2\sigma}s_{X}^{2\sigma-4}h_{Y}^{-2\sigma+d}\|I_{h}u^{*}-s\|^{2}_{L^{2}(\Omega)}
+C\delta^{-2\sigma}h_{Y}^{2(\sigma-2-d/2)}\|u^{*}\|^{2}_{W^{\sigma,2}(\Omega)}.
\end{align*}

For the boundary term, we denote $w_{j}=((I_{h}u^{*}-s)\omega_{j})\circ\Psi_{j}\in W^{\sigma-1/2,2}(B)$.
From Lemma \ref{wl3}, the trace theorem (Theorem 8.7 of \cite{wl87}), the Sobolev imbedding theorems (Theorem 4.12 of \cite{adams}),
the quasi-uniform property of $X$, and the quasi-uniform property of $Y$,we can bound
\begin{align*}
&\quad\|I_{h}u^{*}-s\|^{2}_{W^{1/2,2}(\partial\Omega)}=\sum_{j=1}^{K}\|w_{j}\|^{2}_{W^{1/2,2}(B)}
\\[4pt]
&\leq C\sum_{j=1}^{K}\left(s_{Bj}^{\sigma-1}\|w_{j}\|_{W^{\sigma-1/2,2}(B)}+
s_{Bj}^{-1/2}\|\Pi_{\psi_{j}^{-1}\left(X^{B}\bigcap V_{j}\right)} w_{j}\|_{\infty,\psi_{j}^{-1}\left(X^{B}\bigcap V_{j}\right)}
\right)^{2}
\\[4pt]
&\leq C\sum_{j=1}^{K}\left(s_{Bj}^{2\sigma-2}\|w_{j}\|^{2}_{W^{\sigma-1/2,2}(B)}+
s_{Bj}^{-1}\|\Pi_{\psi_{j}^{-1}\left(X^{B}\bigcap V_{j}\right)} w_{j}\|^{2}_{\infty,\psi_{j}^{-1}\left(X^{B}\bigcap V_{j}\right)}
\right)
\\[4pt]
&\leq Cs_{B}^{2\sigma-2}\|I_{h}u^{*}-s\|^{2}_{W^{\sigma-1/2,2}(\partial\Omega)}+Cs_{B}^{-1}
\|\Pi_{X^{B}} (I_{h}u^{*}-s)\|^{2}_{\infty,X^{B}}
\\[4pt]
&\leq Cs_{B}^{2\sigma-2}\|I_{h}u^{*}-s\|^{2}_{W^{\sigma,2}(\Omega)}+C s_{B}^{-1}
\|\Pi_{X^{B}} (I_{h}u^{*}-u^{*})\|^{2}_{\infty,X^{B}}+Cs_{B}^{-1}\cdot tol^{2}\\[4pt]
&\leq C\delta^{2\sigma}s_{X}^{2\sigma-2}h_{Y}^{-2\sigma+d}\|I_{h}u^{*}-s\|^{2}_{L^{2}(\Omega)}+C s_{B}^{-1}
\|I_{h}u^{*}-u^{*}\|^{2}_{\infty,X^{B}}+Cs_{B}^{-1}\cdot tol^{2}\\[4pt]
&\leq C\delta^{2\sigma}s_{X}^{2\sigma-2}h_{Y}^{-2\sigma+d}\|I_{h}u^{*}-s\|^{2}_{L^{2}(\Omega)}+Cs_{B}^{-1}\max_{1\leq j\leq K}\|w_{j}\|^{2}_{L^{\infty}(B)}+Cs_{B}^{-1}\cdot tol^{2}
\\[4pt]
&\leq C\delta^{2\sigma}s_{X}^{2\sigma-2}h_{Y}^{-2\sigma+d}\|I_{h}u^{*}-s\|^{2}_{L^{2}(\Omega)}+Cs_{B}^{-1}\max_{1\leq j\leq K}h_{Bj}^{2(\sigma-1/2-d/2)}\|w_{j}\|^{2}_{W^{\sigma-1/2,2}(B)}+Cs_{B}^{-1}\cdot tol^{2}
\\[4pt]
&\leq C\delta^{2\sigma}s_{X}^{2\sigma-2}h_{Y}^{-2\sigma+d}\|I_{h}u^{*}-s\|^{2}_{L^{2}(\Omega)}+Cs_{B}^{-1} h_{B}^{2(\sigma-1/2-d/2)}\sum_{j=1}^{K}\|w_{j}\|^{2}_{W^{\sigma-1/2,2}(B)}+Cs_{B}^{-1}\cdot tol^{2}\\[4pt]
&=C\delta^{2\sigma}s_{X}^{2\sigma-2}h_{Y}^{-2\sigma+d}\|I_{h}u^{*}-s\|^{2}_{L^{2}(\Omega)}+Cs_{B}^{-1} h_{B}^{2(\sigma-1/2-d/2)}\|I_{h}u^{*}-u^{*}\|^{2}_{W^{\sigma-1/2,2}(\partial\Omega)}+Cs_{B}^{-1}\cdot tol^{2}\\[4pt]
&\leq
C\delta^{2\sigma}s_{X}^{2\sigma-2}h_{Y}^{-2\sigma+d}\|I_{h}u^{*}-s\|^{2}_{L^{2}(\Omega)}+Cs_{B}^{-1} h_{B}^{2(\sigma-1/2-d/2)}\|I_{h}u^{*}-u^{*}\|^{2}_{W^{\sigma,2}(\Omega)}+Cs_{B}^{-1}\cdot tol^{2}
\end{align*}
\begin{align*}
&\leq C\delta^{2\sigma}s_{X}^{2\sigma-2}h_{Y}^{-2\sigma+d}\|I_{h}u^{*}-s\|^{2}_{L^{2}(\Omega)}+C\delta^{-2\sigma} h_{Y}^{-(\sigma-d/2)/(\sigma-2)}h_{B}^{2(\sigma-1/2-d/2)}\|u^{*}\|^{2}_{W^{\sigma,2}(\Omega)}
+Cs_{B}^{-1}\cdot tol^{2}\\[4pt]
&\leq C\delta^{2\sigma}s_{X}^{2\sigma-2}h_{Y}^{-2\sigma+d}\|I_{h}u^{*}-s\|^{2}_{L^{2}(\Omega)}+
C\delta^{-2\sigma}h_{Y}^{2(\sigma-2-d/2)}\|u^{*}\|^{2}_{W^{\sigma,2}(\Omega)}.
\end{align*}
By using the condition (\ref{condition}), combining both above results with the interpolation error (\ref{zz}) and setting $d=2$, we obtain the convergence rate.
\end{proof}

\section{Conclusions}
Although the meshfree method has been used for solving the Monge-Amp\textrm{$\grave{\textrm{e}}$}re equation for a long time,
we proved the convergence of this method until now in this paper.
The theoretical convergence was established by using the regularity theory of the solution,
the inverse inequality, the sampling inequality, and the approximation property of the radial basis functions trial spaces.
However, the convergence rate is currently not yet optimal and deserve refinement.

\bibliographystyle{model1c-num-names}
\bibliography{<your-bib-database>}

\section*{References}

\end{document}